\def \b{{\bf b}}
\def \bz{{\bf z}}
\newcommand{\mean}[1]{\{ \kern -1.6mm \{#1\} \kern -1.6mm \}}
\newcommand{\jump}[1]{[ \kern -.7mm [#1] \kern -.7mm ]}
\newcommand{\be}{\begin{equation}}\newcommand{\ee}{\end{equation}}
\newcommand{\ndg}[1]{| \kern -.25mm \|{#1}| \kern -.25mm \|}
\renewcommand{\ldots}{\dotsc}
\def\card{{\rm card}\,}
\def \bl{{\bf l}}
\def\b0{{\bf 0}}
\def\bi{{\bf i}}
\def\bx{{\bf x}}
\def\bj{{\bf j}}
\def\bm{{\bf m}}
\def\bn{{\bf n}}
\def\bk{{\bf k}}
\def\ba{{\bf a}}
\def\by{{\bf y}}
\def\erf{{\rm erf}\,}
\begin{document}

\title{Fast multilevel sparse Gaussian kernels \\ for high-dimensional approximation \\ and integration}

\author{
Zhaonan Dong, Emmanuil H. Georgoulis, Jeremy Levesley, \and Fuat Usta}

\institute{Department of Mathematics, University of Leicester, \\ University Road, Leicester, LE1 7RH, United Kingdom\\
{\tt \{ zd14,\ Emmanuil.Georgoulis,\ j.levesley,\ fu6 \} @le.ac.uk}  }

\maketitle

\begin{abstract}
A fast multilevel algorithm based on directionally scaled tensor-product Gaussian kernels on structured sparse grids is proposed for interpolation of high-dimensional functions and for the numerical integration of high-dimensional integrals. The algorithm is based on the recent Multilevel Sparse Kernel-based Interpolation (MLSKI) method (Georgoulis, Levesley \& Subhan, \emph{SIAM J. Sci. Comput.}, 35(2), pp.~A815--A831, 2013),  with particular focus on the fast implementation of Gaussian-based MLSKI for interpolation and integration problems of high-dimen-sional functions $f:[0,1]^d\to\mathbb{R}$, with $5\le d\le 10$. The MLSKI interpolation procedure is shown to be interpolatory and a fast implementation is proposed. More specifically, exploiting the tensor-product nature of anisotropic  Gaussian kernels, one-dimensional cardinal basis functions on a sequence of hierarchical equidistant nodes are precomputed to machine precision, rendering the interpolation problem into a fully parallelisable ensemble of linear combinations of function evaluations. A numerical integration algorithm is also proposed, based on interpolating the (high-dimensional) integrand.  A series of numerical experiments highlights the applicability of the proposed algorithm for interpolation and integration for up to 10-dimensional problems. 
\end{abstract}

\section{Introduction}
Approximation, interpolation and numerical integration of high-dimensional functions have numerous applications in applied sciences, ranging from mathematics, statistics, physics and engineering, to economics and finance. 

For low dimensional multivariate functions $f:\Omega\to\mathbb{R}$, $\Omega\subset \mathbb{R}^d$, with $2\le d\le 4$, classical interpolation and integration (cubature) methods are able to deliver accurate and efficient results to high accuracy, see e.g., \cite{davis,sobolev} and the references therein. For $d\ge 5$, standard algorithms are challenged by the, so-called, curse of dimensionality. Indeed, standard polynomial, spline or kernel-based interpolation methods of a function $f:[0,1]^d\to\mathbb{R}$, on a $d$-dimensional grid with $N$ nodes in each direction, typically require $N^d$ function evaluations to achieve rates of convergence of order $N^{-\alpha}$, for some $\alpha>0$ independent of $d$; a manifestation of the curse of dimensionality, as the interpolation problem becomes exponentially more computationally intensive with $d$.

To address this issue, a number of methods based on either randomness, especially for integration (Monte-Carlo, quasi-Monte-Carlo, multilevel Monte-Carlo; see, e.g., 
\cite{Cal98,Dick,MR2928989,MR2835612,milsteintretyakov} and the references therein) or on Smolyak/sparse grid/hyperbolic cross/boolean interpolation-type constructions (see, e.g., \cite{Smo63,DelvosFJ1982,TEM89,SchreiberAnja2000,griebel_acta_numerica} and the references therein) have appeared in the literature during the last five decades. Monte-Carlo-type approaches are, generally speaking, robust and easy to implement and their accuracy is independent of the problem dimension $d$, at the cost of typically slow convergence in a probabilistic sense \cite{Cal98,Dick}. Sparse grids and hyperbolic crosses are applicable when the function $f$ has sufficiently regular mixed derivatives; in such cases, they can deliver, typically algebraic, rates of convergence, independent of $d$ with typical complexity of $N(\log N)^{d-1}$, where $N$ is the number of points in each direction. More recently, ``$p$-version'' sparse grids, based on polynomial interpolation on Chebyshev or Clenshaw-Curtis sparse tensor-product type grids have been proposed for the calculation of high-dimensional integrals arising in the numerical approximation of elliptic boundary-value problems with random coefficients \cite{MR2318799,MR2421037,MR2646806}.

Another approach, that can be positioned within the second class of methods described above is the recent Multilevel Sparse Kernel-based Interpolation (MLSKI) method \cite{GLS}. MLSKI method is based on the concept of combination (also known as $d$-dimensional boolean interpolation \cite{DelvosFJ1982}) of smaller interpolation problems on directionally uniform nodes, whose union coincides with the sparse grid node systems produced by hierarchical linear splines. Each interpolation sub-problem is evaluated using anisotropically scaled versions of translation of standard kernels/radial basis functions. All interpolation problems are then linearly combined appropriately, leading to the Sparse Kernel-based Interpolation method (SKI). To further exploit the approximation power of the SKI interpolants, which use naturally nested sparse grid nodes, the interpolation procedure is performed in a multilevel fashion, starting from a coarse SKI interpolant $s_0$ of the function $f$, and then continuing by interpolating the residual $f-s_0$ on the next level sparse grid, and so on. We stress that, despite the fact that the MLSKI method is applied on sparse grids arising from \emph{uniform} full grids, it has not been observed to be susceptible to classical Runge-type instabilities. Hence, MLSKI can be viewed as an alternative to ``$p$-version'' sparse grids, by offering comparable convergence rates on ``uniform'' (and, therefore, hierarchical) sparse grids.

This work is concerned with further investigating the MLSKI approach both theoretically and numerically. More specifically, focusing on MLSKI using Gaussian kernels, we show that at each level of SKI, the method indeed, interpolates the function $f$. To achieve numerical exactness on all the nodes, the MLSKI approach it is sufficient to use tensor-product-type kernels, i.e., Gaussians. This also leads naturally to the fast implementation of the sub-problem computations of MLSKI \cite{GLS}, each of which can be computed completely independently. In particular, exploiting the tensor-product nature of anisotropic  Gaussian kernels, one-dimensional cardinal basis functions on a sequence of hierarchical equidistant nodes are precomputed to machine precision, rendering the interpolation problem into a fully parallelisable ensemble of linear combinations of function evaluations. A numerical integration algorithm is also proposed, based on interpolating the (high-dimensional) integrand.  A series of numerical experiments is presented, highlighting the practical applicability of the proposed algorithm for interpolation and integration for up to 10-dimensional problems.

In Section~\ref{MLSKI}, we give a brief description of the method, while, in Section~\ref{tensor}, we show that for the Gaussian kernel the cardinal functions for interpolation can be written as a product of univariate cardinal functions. In Section~\ref{tensint}, we show that if the kernel is a tensor product (such as the Gaussian) then MLSKI is also an interplatory scheme. In Section~\ref{integ}, we discuss the integration scheme with some numerical results which demonstrate the good approximation properties of the method. 

\section{Multilevel sparse kernel-based interpolation} \label{MLSKI}

We begin by briefly reviewing the MLSKI method, introduced in \cite{GLS}. To construct the sparse kernel-based interpolant, at each level, we solve a number of anisotropic radial basis function interpolation problems on appropriate direction-wise uniform sub-grids. We briefly note that due to the careful selection of anisotropic scalings for the individual interpolation problems, these are typically sufficiently well conditioned for practical computations; cf. \cite{GLS}. The individually solved interpolation problems are then linearly combined to obtain the sparse kernel-based interpolant. The multilevel sparse kernel based interpolation (MLSKI) uses a residual interpolation at different levels.

More specifically, let $ \Omega$ $ \mathrel{\mathop:}= $, $[ 0,1 ]^{d}$, $d\ge 2$, and let
$u:\Omega \rightarrow
 \mathbb{R}$. For a multi-index $\mathbf{l} = (l_{1},\ldots,l_{d})\in
\mathbb{N}^{d}$,  we define the family of directionally uniform grids
$\{\mathbb{X}_{\mathbf{l}}:\mathbf{l}\in
\mathbb{N}^{d}\}$, 
in $\Omega$, with meshsize
$h_{\mathbf{l}}=2^{-\mathbf{l}}:=(2^{-l_{1}},\ldots,2^{-l_{d}})$. Then,
 $\mathbb{X}_{\mathbf{l}}$ consists of the points
 $ \mathbf{x}_{\mathbf{l},\mathbf{i}}:= (x_{l_{1},i_{1}},\ldots,x_{l_{d},i_{d}}) $,
 with $x_{l_{j},i_{j}}=i_{j}2^{-l_{j}}$, for $ i_{j}=0,1,\dots, 2^{l_{j}}$, $j=1,\dots, d$.
 The number of nodes
$N^{\mathbf{l}}$ in $\mathbb{X}_{\mathbf{l}}$ is given by
$$
N^{\mathbf{l}}=\prod_{j=1}^{d}(2^{l_{j}}+1).
$$
If $h_{l_{j}}$ = $2^{-n},$  for all $j=1,\cdots,d,$
 $\mathbb{X}_{\mathbf{l}}$ is the uniform \emph{full grid} of level
$n$, having
 size $N$=$(2^{n}+1)^{d}$; this will be denoted by $\mathbb{X}^{n,d}$.
 
We also consider the following subset of $\mathbb{X}^{n-d+1,d}$,
\begin{equation}\label{sparse_grid_def}
\tilde{\mathbb{X}}^{n,d}:=\bigcup_{ | \mathbf{l}|_{1} = n+(d-1)}\mathbb{X}_{\mathbf{l}},
\end{equation}
with $|\mathbf{l}|_1:=l_1+\dots+l_d$, which will be referred to as the \emph{sparse grid of level $n$ in $d$ dimensions}; see Figure \ref{Fig:Sparse grid def} an illustration with $n=4$ and $d=2$.

\begin{figure}
\begin{center}
%\[
\begin{tabular}{m{1.8cm}m{0.2cm}m{1.6cm}m{0.2cm}m{1.6cm}m{0.2cm}m{1.6cm}m{0.2cm}m{1.6cm}}
   \includegraphics[width=1.9cm]{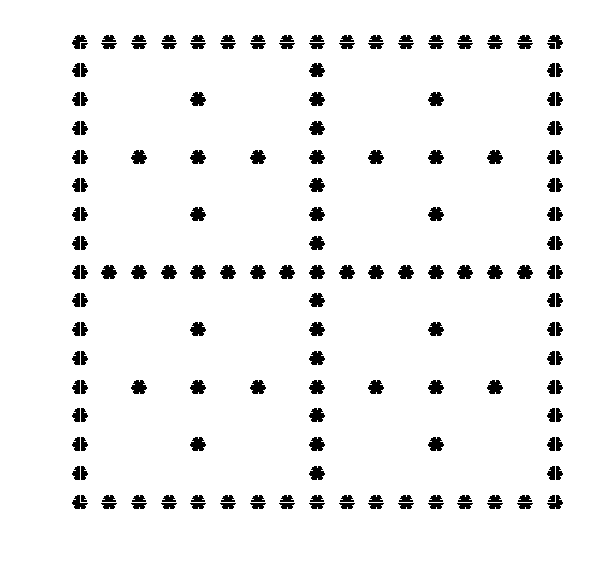} &=
      &\includegraphics[width=1.7cm]{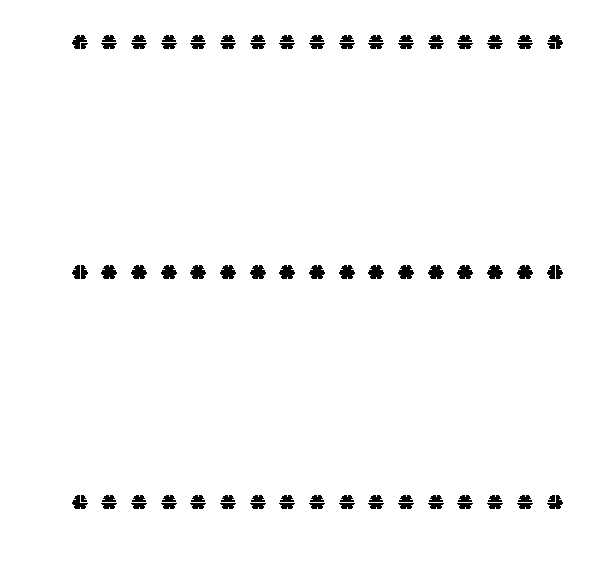} 
     & $\ \cup$  &\includegraphics[width=1.7cm]{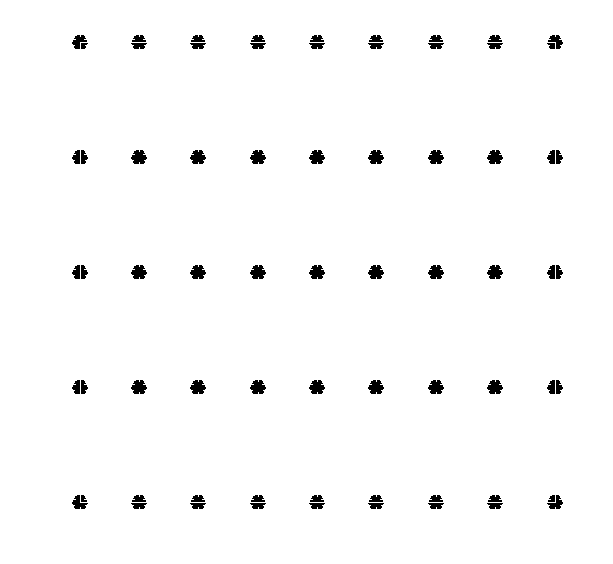} 
     & $\ \cup$ & \includegraphics[width=1.7cm]{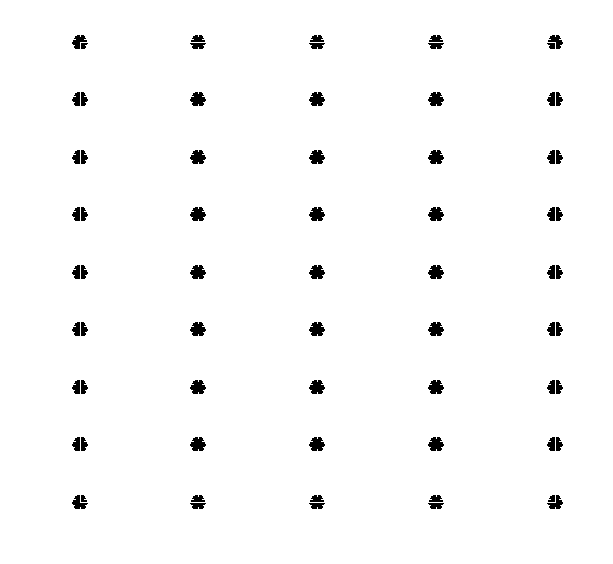} 
     & $\ \cup$ & \includegraphics[width=1.7cm]{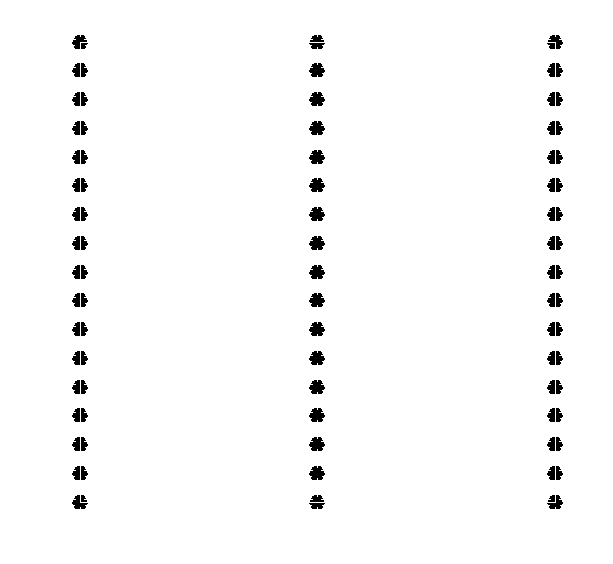}  
\end{tabular}
%\]
\end{center}
\caption{Sparse grid $\tilde{\mathbb{X}}^{4,2}$ via \eqref{sparse_grid_def}. } \label{Fig:Sparse grid def}
\end{figure}

Further, we define the transformation matrix $A_{\mathbf{l}}\in \mathbb{R}^{d \times d}$ by 
\[
A_{\mathbf{l}}:={\rm diag}(2^{l_{1}},\ldots,2^{l_{d}}).
\]
The anisotropic \emph{radial basis function} (RBF) interpolant $S_{A_{\mathbf{l}}}$ of $u$ at the points of $\mathbb{X}_{\mathbf{l}}$ is then defined by
\begin{equation}
S_{A_{\mathbf{l}}}(\mathbf{x}):=
\sum_{\mathbf{x}_{\bl,\bi}\in\mathbb{X}_{\bl}}c_{\bl,\bi}\varphi(\|
A_{\mathbf{l}}(\mathbf{x}-\mathbf{x}_{\bl,\bi})\|),
\end{equation}
for $\mathbf{x}\in\Omega$, where $c_{\bl,\bi}\in\mathbb{R}$ are chosen so that the interpolation conditions
\[
 S_{A_{\mathbf{l}}}|_{\mathbb{X}_{\mathbf{l}}}=u|_{\mathbb{X}_{\mathbf{l}}},
\]
are satisfied. Although $\phi$ can be chosen from a large family of RBF kernels (cf. \cite{GLS} for details), this work will be focused on the choice of Gaussian kernels, that is, $\phi(r)= \exp(-c^2r^2)$, for $r\ge 0$ and $c>0$, denoting the, so-called, \emph{shape-parameter}.

The \emph{sparse kernel-based interpolant} (SKI, for short) $\tilde{S}_{n,d} $ is then given by
\begin{equation}\label{Eq:SPGDinterpcombination}
\tilde{S}_{n,d}(\mathbf{x}) =\sum^{d-1}_{q=0}(-1)^{q} \left(
                                              \begin{array}{c}
                                                d-1 \\
                                                q \\
                                              \end{array}
                                            \right)
 \sum_{| \mathbf{l}|_{1}=n+(d-1)-q} S_{A_{\mathbf{l}}}(\mathbf{x}).
\end{equation}
The above combination formula has been used, for instance, for Lagrange polynomial interpolation in~\cite{DelvosFJ1982}, and  for the numerical solution of elliptic partial differential equations using the finite element method on sparse grids in ~\cite{GSZ92,GarckeandGriebel2001}. 

In Figure \ref{Fig:Sparse grid def2}, we show a visualisation of the SKI interpolant $\tilde{S}_{4,2}$ as a linear combination of the constituent sub-grid interpolants.
\begin{figure}
\begin{center}
%\[
\begin{tabular}{m{1.8cm}m{0.2cm}m{1.8cm}m{0.2cm}m{1.8cm}m{0.2cm}m{1.8cm}m{0.2cm}m{1.8cm}}
   \includegraphics[width=1.9cm]{gridS4.png} &=
      &\includegraphics[width=1.8cm]{gridX41.png} 
     & $\oplus$  &\includegraphics[width=1.8cm]{gridX32.png} 
     & $\oplus$ & \includegraphics[width=1.8cm]{gridX23.png} 
     & $\oplus$ & \includegraphics[width=1.8cm]{gridX14.png}  \\
     & $\ominus$  &\includegraphics[width=1.8cm]{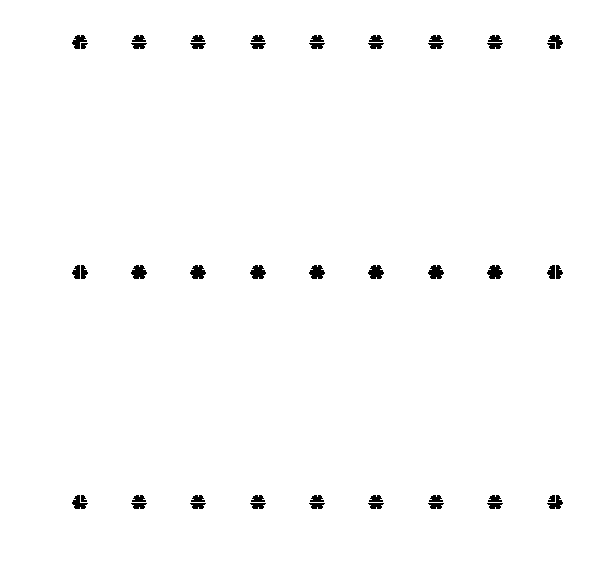} 
     & $\ominus$ & \includegraphics[width=1.8cm]{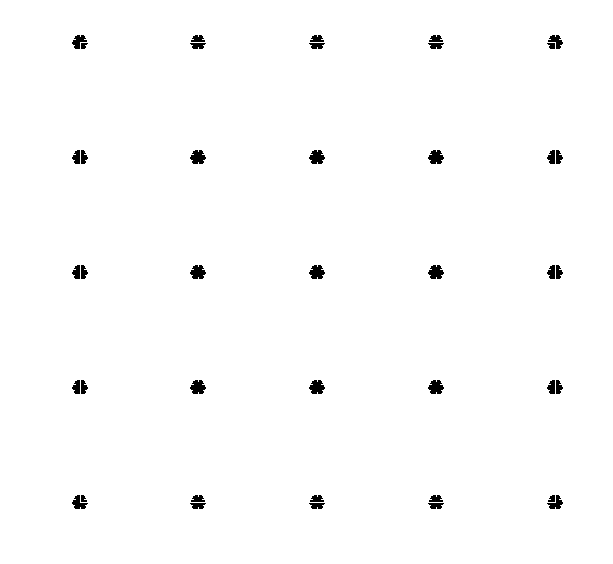} 
     & $\ominus$ & \includegraphics[width=1.8cm]{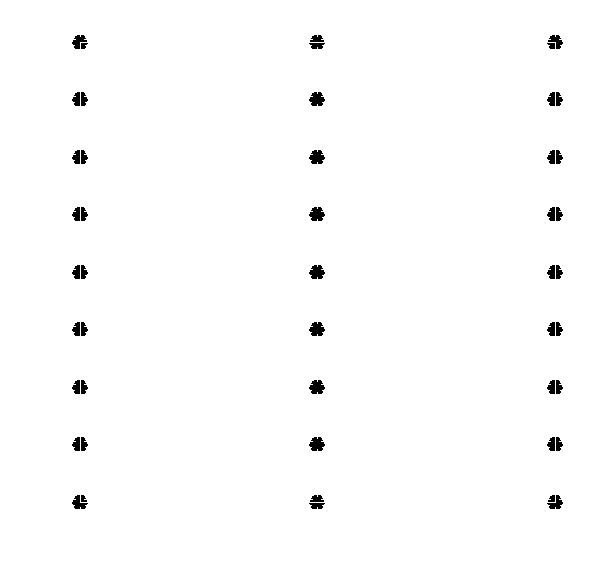}  &
\end{tabular}
%\]
\end{center}
\caption{The construction of $\tilde{S}_{4,2}$ interpolant on $\tilde{\mathbb{X}}^{4,2}$.} \label{Fig:Sparse grid def2}
\end{figure}
In the next section, we shall show that this procedure indeed results to an interpolation method, when $\phi$ is Gaussian.

The convergence of SKI was investigated in \cite{GLS}, where it was found that, although it results to acceptable approximation power when interpolating simpler tensor-product-type functions $f$, it can be also prone to very slow convergence on more challenging functions. This appears to be due to the fact that, in contrast with standard sparse grid/hyperbolic cross/Smolyak-type polynomial or linear spline interpolants, the approximation space of the SKI of level $n$ does \emph{not} contain the respective approximation spaces of levels $1,\dots, n-1$ as subspaces. In short, the SKI interpolation is not hierarchical in each level. 

To overcome this, a multilevel approach to SKI was proposed in \cite{GLS}. There is a growing volume of literature on multilevel methods for RBFs, e.g. \cite{Floater&Iske1996,Iske2001,Wendland1998,Fasshauer&Jerome1999,Fasshauer99b,Narcowich&Schaback&Ward1999,Hales&Levesley2002,IskeANDLeveslely2005} . The idea is to interpolate a high frequency residual with a rougher interpolant. 

The setting of SKI is naturally suited to be used within a multilevel interpolation algorithm. Firstly, the sparse grids from lower to higher level are nested, i.e., $\tilde{\mathbb{X}}^{n,d}\subset \tilde{\mathbb{X}}^{n+1,d}$ for $n\in\mathbb{N}$. Secondly,  each sub-grid interpolant  uses appropriately scaled anisotropic basis function with the scaling being proportional to density of the corresponding constituent sub-grid. Finally, due to the geometrical progression in the problem size from one sparse grid to the next, a multilevel algorithm would not affect adversely the attractive complexity properties of SKI.

The \emph{multilevel SKI} (MLSKI, for short) algorithm is initialised by computing the SKI $\tilde{S}_{n_0^{},d}$ on the coarsest sparse grid $\tilde{\mathbb{X}}^{n_0^{},d}$ and set $\Delta_0 :=\tilde{S}_{n_0^{},d}$. Then, for $k=1,\dots n$, $\Delta_k$ is the sparse grid interpolant to the residual $u-\sum_{j=0}^{k-1}\Delta_j$ on $\tilde{\mathbb{X}}^{k,d}$. The resulting multilevel sparse kernel based interpolant is then given by 
\[
\tilde{S}^{\rm ML}_{n,d} : = \sum_{j=0}^{n}\Delta_j.
\]

Below we describe how, if the kernel is a tensor product, we can construct a tensor product basis for the interpolation once and for all, and so avoid the costly solution of subsystems of equations, which is, in principle, highly scalable in a parallel computing architecture.

\section{Tensor Products of Univariate Cardinal Functions} \label{tensor}

Gaussian kernels on $\mathbb{R}^d$ can be viewed as tensor products of univariate Gaussians; this is evident also for the anisotropic versions of Gaussians, such as the ones discussed in the previous section. This crucial property of Gaussians will be instrumental in both the proof of interpolation of the MLSKI algorithm, and the development of fast procedures to evaluate the respective MLSKI interpolants. 

We begin by considering the set of cardinal (also known as Lagrange) functions for each interpolation sub-problem in the SKI interpolant $S_{A_{\mathbf{l}}}(\cdot)$ in \eqref{Eq:SPGDinterpcombination}, with the particular choice of Gaussian RBFs on the set $\mathbb{X}_{\mathbf{l}}$.  Let $\chi_{\bl,\bx_{\bl,\bi}}$ be the cardinal function on the grid indexed by $\bl$ for the point $\bx_{\bl,\bi} \in \mathbb{X}_{\bl}$. Let also $\chi_{l_j,x_{l_j,i_j}}$ denote the univariate cardinal function in one variable for the node $x_{l_j,i_j}$, with respect to the set of nodes $\{x_{l_j,i_j}:i_j = 0,1,\dots,2^{l_j} \}$. Hence, for $j=1,\cdots,d$, there exist $\gamma_{l_j,i_j}\in\mathbb{R}$, such that
$$
\chi_{l_j,x_{l_j,i_j}} (y) = \sum_{i_j=0}^{2^{l_j}} \gamma_{l_j,i_j} \exp(-c^2h_{l_j}^2 (y-x_{l_j,i_j})^2),
$$
for $y\in[0,1]$. Then, we have
%\begin{eqnarray*}
%z(\by) = \prod_{j=1}^d \chi_{l_j,x_{l_j,i_j}} (y_j) & = & \prod_{i=1}^d \sum_{k=0}^{2^{m_i}} c_{i,k} \exp(-h_i^2 (j-x_{l_i,k}))^2 \\
%& = & \sum_{\bk \in X_\bm} c_\bk \exp(-\sum_{i=i}^d h_i^2 (y_i-x_{l_i,k})^2),
%\end{eqnarray*}
\[
z(\by) := \prod_{j=1}^d \chi_{l_j,x_{l_j,i_j}} (y_j)  = \sum_{\bx_{\bl,\bi} \in \mathbb{X}_{\bl}} \gamma_{\bl,\bi} \exp(-c^2\sum_{j=i}^d h_{l_j}^2 (y_i-x_{l_j,i_j})^2),
\]
where $\gamma_{\bl,\bi}= \prod_{j=1}^d \gamma_{l_j,i_j} $. This is exactly the form of the cardinal function based on the points in the grid $\mathbb{X}_\bl$, which implies $ z(\by) = \chi_{\bl,\bx_{\bl,\bi}}$, due to the uniqueness of Gaussian interpolation.

Hence, it is possible to compute the cardinal functions for multivariate approximation by computing {\sl ab initio} (to arbitrarily high precision, e.g., by using symbolic calculators) the cardinal functions for univariate approximation up to (for instance) $5, 9, 17,  \cdots ,129$ equally spaced points, and store these. The approximation process would then require no solution of linear systems, massively increasing the speed of the algorithm. This algorithm will be implemented below in the numerical experiments.

\section{Tensor product kernels give interpolatory schemes}  \label{tensint}

We shall show that the combination formula~\eqref{Eq:SPGDinterpcombination} is, indeed, an interpolant. To highlight the key ideas of the proof, we first consider an example in three dimensions. A two dimensional example is too straightforward, and a four dimensional one is already too complicated.

Setting, for instance, $d=3$ and $n=7$, we seek to compute the value of the sparse kernel-based interpolant $\tilde{S}_{3,7}$ to the function $f$ at the point $a=(1/4,1/8,1/16)=(2^{-2},2^{-3},2^{-4})$, which first appears in the grid with multi-index $\bl=(2,3,4)$. From~\eqref{Eq:SPGDinterpcombination}, we see that for $d=3$, the sub-grids of the previous two levels are linearly combined to give $\tilde{S}_{3,7}$.  Hence, the set of multi-indices which index the approximation are $\{ \alpha: |\alpha|=7,8,9 \}$. 

We decompose these sets of multi-indices into two groups: 
%the following sets:
%$$
%\begin{array}{l}
%\{ (k,1,1) : k=5,6,7 \}, \{ (1,k,1) : k=5,6,7 \} , \{ (1,1,k) : k=5,6,7 \}, \\
%\{ (k,1,2) : k=4,5,6 \}, \{ (1,k,2) : k=4,5,6 \} , \{ (1,2,k) : k=4,5,6 \}, \\
%\{ (k,2,1) : k=4,5,6 \}, \{ (k,2,2) : k=4,5,6 \} , \{ (k,2,3) : k=4,5,6 \}, \\
%\{ (k,1,3) : k=3,4,5 \}, \{ (1,k,3) : k=3,4,5 \},  \\
%\{ k,l,3) \} : k+l \le 6, k \ge 2 \; \& \;  l \ge 3 \}, \\
%\{ k,l,2) \} : k+l \le 7, k \ge 2 \; \& \;  l \ge 3 \}, \\
%\{ k,l,1) \} : k+l \le 8, k \ge 2 \; \& \;  l \ge 3 \}, \\
%\{ (k,2,l) : k+l \le 7,  k \ge 2 \; \& \;  l \ge 4 \}, \\
%\{ (k,1,l) : k+l \le 8,  k \ge 2 \; \& \;  l \ge 4 \}, \\
%\{ (1,k,l) : k+l \le 8,  k \ge 3 \; \& \;  l \ge 4 \}. \\
%\end{array}
%$$
The first sets of grids, let us call them Group 1, have two of the three components of the multi-index less than the corresponding components for the multi-index for the point, e.g., the grids represented by the multi-indices $\{ \alpha=(k,1,1) : k=5,6,7 \}$. The remaining grids, let us call them Group 2, have only one component less than the corresponding component for the point; for instance the family of grids $\{\alpha=( k,l,2):k+l \le 7, \; k \ge 2, \;  l \ge 3\}$.

We shall now study the values of the function $f$ on a typical set of grids from Group 1, $\{ (k,1,1) : k=5,6,7 \}$. To this end, we consider the cardinal functions from points on these grids, and their values at the point $a$. Let $b=(r_1 2^{-k},r_2/2,r_3/2)$, for some $r_i\in\mathbb{N}_0$, $i=1,2,3$, where $0 \le r_1 \le 2^k$, $0 \le r_2,r_3 \le 2$, with cardinal function $\chi_{(k,1,1),b}$. Then 
$$\chi_{(k,1,1),b}(a) = \chi_{k,r_1 2^{-k}}(1/4) \chi_{1,r_2/2}(1/8)\chi_{1,r_3/2}(1/16) = 0,
$$ 
if $r_1 2^{-k} \neq 1/4$, since $k \ge 2$. If $r_1 2^{-k} = 1/4$ then 
$$
\chi_{(k,1,1),b}(a) = \chi_{1,r_2/2}(1/8)\chi_{1,r_3/2}(1/16)
$$ 
which is independent of $k$. 

Thus $(1/4,r_2/2,r_3/2)$ is a node on all grids $\{ \mathbb{X}_{(k,1,1)} : k=5,6,7 \}$, and the value of the cardinal functions $\chi_{(k,1,1),b}$ are identical at $a$. Hence, the contribution to the interpolant from $f(b)$, on these grids, is
\begin{eqnarray*}
&& \sum^{2}_{q=0}(-1)^{q} {2 \choose q} f(b) \chi_{(7-q,1,1),b}(a) \\ 
& = & f(b) \chi_{1,r_2/2}(1/8)\chi_{1,r_3/2}(1/16) \sum^{2}_{q=0}(-1)^{q} {2 \choose q} \\
& = & 0.
\end{eqnarray*}
If we combine the results of the last two paragraphs we see that the contribution to the interpolant from all of the points on grids from Group 1 is 0. 

We now turn to grids from Group 2. Let us consider points on the grids $\mathbb{X}_{( k,l,2)}$ for $ k+l \le 7$, $k \ge 2$, and $  l \ge 3 $, and their values at $a$. Let $b=(r_1 2^{-k},r_2 2^{-l} ,r_3/4)$, for some $0 \le r_1 \le 2^k$, $0 \le r_2 \le 2^l$, and $0 \le r_3 \le 4$ with cardinal function $\chi_{(k,l,2),b}$. Then, 
\[
\chi_{(k,l,2),b}(a) = \chi_{k,r_1 2^{-k}}(1/4) \chi_{l,r_2 2^{-l}}(1/8)\chi_{2,r_3/4}(1/16) = 0,
\]
unless $r_1 2^{-k} = 1/4$ and $r_2 2^{-l} = 1/8$, since $k \ge 2$ and $l \ge 3$. If  $r_1 2^{-k} = 1/4$ and $r_2 2^{-l} = 1/8$, then $\chi_{(k,l,2),b}(a)=\chi_{2,r_3/4}(1/16)$, which is independent of $k$ and of $l$.

Thus, $(1/4,1/8,r_3/4)$ is contained on all grids $\mathbb{X}_{( k,l,2)}$ for $ k+l \le 7$, $k \ge 2$, and $  l \ge 3 $, and the value of the cardinal functions $\chi_{(k,l,2),b}$, for all the permissible values of $k$ are identical at $a$. Hence, the contribution to the interpolant from $f(b)$, on these grids, is
\begin{eqnarray*}
\lefteqn{\sum^{2}_{q=0}(-1)^{q} {2 \choose q} f(b) \sum_{k+l \le 7-q, \; k \ge 2, \;  l \ge 3} \chi_{(k,l,2),b}(a)} && \\
 & = & f(b) \chi_{2,r_3/4}(1/16) \sum^{2}_{q=0}(-1)^{q} {2 \choose q}  \card \{(k,l) : k+l \le 7-q, k \ge 2, \;  l \ge 3 \} \\
& = & f(b) \chi_{2,r_3/4}(1/16) \sum^{2}_{q=0}(-1)^{q} {2 \choose q}  (3-q) \; = \; 0.
\end{eqnarray*}
Therefore, the contribution to the interpolant from all grids in Group 2 is also zero. 

Thus, the only grid contributing to the interpolant is $\mathbb{X}_{(2,3,4)}$, and the only cardinal function from that grid which takes non zero values at $a$ is the cardinal function at $a$ itself. Therefore, the SKI $\tilde{S}_{7,3}$ is, indeed, an interpolant at all points of $\tilde{\mathbb{X}}^{7,3}$. Hopefully, this example highlights clearly the key role that tensor-product nature of the kernel has in the proof of interpolation property. 

Equipped with the insight gained by the above example, we shall now consider the general case. To this end, we begin with the following counting result.
\begin{lemma} \label{poly}
Let $\bk \in \mathbb{N}^d$ and $|\bk| < p\in\mathbb{N}$. Then, 
\[
\card \{ \bj \in \mathbb{N}^d : \bj \ge \bk, \; |\bj| = p \} = {p-|\bk|+d-1 \choose d-1}.
\]
\end{lemma}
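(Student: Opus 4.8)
The plan is to prove this by the standard \emph{stars-and-bars} (composition-counting) argument, preceded by a translation of the index set. First I would introduce the change of variables $\bm := \bj - \bk$. Since $\bj \ge \bk$ holds componentwise precisely when every entry of $\bm$ is a nonnegative integer, and since $|\bj| = p$ is equivalent to $|\bm| = |\bj| - |\bk| = p - |\bk|$, the map $\bj \mapsto \bm$ is a bijection from $\{ \bj \in \mathbb{N}^d : \bj \ge \bk,\ |\bj| = p \}$ onto the set of $d$-tuples of nonnegative integers whose entries sum to $N := p - |\bk|$. The hypothesis $|\bk| < p$ guarantees $N \ge 1 > 0$, so this latter set is nonempty and the assertion reduces to the classical identity
\[
\card \{ (m_1,\dots,m_d) \in \mathbb{N}_0^d : m_1 + \dots + m_d = N \} = \binom{N + d - 1}{d-1}.
\]

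Second, I would establish this identity. The quickest self-contained route is induction on $d$: the case $d = 1$ is trivial (both sides equal $1$), and the inductive step follows by summing over the last coordinate, $\sum_{m_d = 0}^{N} \binom{(N - m_d) + (d-1) - 1}{d - 2} = \binom{N + d - 1}{d - 1}$, which is precisely the hockey-stick identity $\sum_{t=0}^{N} \binom{t + d - 2}{d-2} = \binom{N + d - 1}{d - 1}$. Alternatively, one may simply invoke the familiar bijection between such tuples and binary words with $N$ ``stars'' and $d-1$ ``bars''. Substituting back $N = p - |\bk|$ then yields $\binom{p - |\bk| + d - 1}{d-1}$, which is the claimed count.

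I do not expect any genuine obstacle here: the lemma is elementary, and the only point requiring a moment's care is verifying that $\bj \mapsto \bj - \bk$ really is a bijection onto the nonnegative-integer solution set (and that the right-hand side is a well-defined count, which is exactly where $|\bk| < p$ is used). Everything beyond that is the textbook composition formula, and the role of this lemma in the sequel is to supply the combinatorial cardinalities appearing in the telescoping sums of the general interpolation-property proof, generalising the elementary counts ($3-q$, etc.) that appeared in the three-dimensional example above.
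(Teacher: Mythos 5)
Your proof is correct and is, at heart, the same elementary counting argument as the paper's: both come down to the hockey-stick identity $\sum_{t=0}^{N}\binom{t+d-2}{d-2}=\binom{N+d-1}{d-1}$. The only real difference is organisational --- the paper runs an induction on $d$ directly on the set $\{\bj \ge \bk,\ |\bj|=p\}$ by peeling off the first coordinate $j_1$, whereas you first translate by $\bm:=\bj-\bk$ to reduce to the textbook composition count and only then induct (or invoke stars-and-bars). Your translation-first route is cleaner and, notably, avoids an off-by-one slip in the paper's own displayed computation: applying the inductive hypothesis in dimension $d-1$ should give $\binom{p-j_1-|\tilde{\bk}|+d-2}{d-2}$, but the paper writes $\binom{p-j_1-|\tilde{\bk}|+d-1}{d-1}$ and its final line consequently reads $\binom{p-|\bk|+d}{d}$ rather than the claimed (and correct) $\binom{p-|\bk|+d-1}{d-1}$; your bookkeeping gets this right. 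The only tiny quibble is that the hypothesis $|\bk|<p$ is not actually needed for the identity itself (at $N=0$ both sides equal $1$); it only serves your nonemptiness remark.
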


\begin{proof} For $d=1$, the result is immediate, since for $k< p$,
\[
 \card \{ j \in \mathbb{N} : j \ge k, \; j = p \}=1.
 \]
  For $d\ge 2$, let us write $\bk = (k_1,\tilde \bk)$, and consider we set , and consider $\{ \tilde \bj \in \mathbb{N}^{d-1} : |\tilde \bj | = p-j_1 \}$, for $j_1=k_1,k_1+1,\cdots,p-|\tilde k|$. Then, by induction,
\begin{eqnarray*}
\card \{ \tilde{\bj}\in\mathbb{N}^{d-1}  : \tilde \bj \ge \tilde \bk, \; | \tilde \bj | = p-j_1 \} & =&  {p-j_1-|\tilde \bk|+d-1 \choose d-1}.
\end{eqnarray*}
Thus, 
\begin{eqnarray*}
\card \{ \bj \in \mathbb{N}^d : \bj \ge \bk, \; |\bj| = p \}  & =& \sum_{j_1=k_1}^{p-|\tilde \bk|}  \card \{ \tilde \bj  : \tilde \bj \ge \tilde \bk, \; | \tilde \bj | = p-j_1 \}   \\
& = & \sum_{j_1=k_1}^{p-|\tilde \bk|}  {p-j_1-|\tilde  \bk|+d-1 \choose d-1} \\
& = &  \sum_{j_1=1}^{p-|\bk|+1}  {j_1+d-2 \choose d-1}  \\
& = & {p-|\bk|+d \choose d},
\end{eqnarray*}
using the well-known summation formula for binomial coefficients; e.g., \cite[1.2.2]{ab}. $\Box$
\end{proof}

We are now ready to state and prove the main result of this section.

\begin{theorem}
Assuming that the interpolation kernel has the form
\[
\psi(\by) = \prod_{i=1}^d \phi(y_i),
\] then SKI with this kernel is an interpolatory scheme.
\end{theorem}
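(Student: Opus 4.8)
I would prove this by generalising the three-dimensional computation above. Two ingredients are in hand: the factorisation $\chi_{\bl,\bx_{\bl,\bi}}(\by)=\prod_{j=1}^{d}\chi_{l_j,x_{l_j,i_j}}(y_j)$ of the sub-grid cardinal functions (the argument of Section~\ref{tensor}, which applies to any tensor-product kernel whose one-dimensional interpolation problems are uniquely solvable), and the counting formula of Lemma~\ref{poly}. Fix $a\in\tilde{\mathbb{X}}^{n,d}$; it suffices to show $\tilde{S}_{n,d}(a)=f(a)$. Every coordinate $a_j$ is a dyadic rational and hence has a unique minimal level $k_j$, i.e. $a_j$ lies on the univariate level-$\ell$ grid precisely when $\ell\ge k_j$; put $\bk=(k_1,\dots,k_d)$. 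Since $a$ lies on some $\mathbb{X}_{\bl}$ with $|\bl|_1=n+d-1$ and then $\bk\le\bl$ componentwise, $|\bk|_1\le n+d-1=:M$. Expanding each sub-grid interpolant in its cardinal basis and inserting the factorisation, $\tilde{S}_{n,d}(a)$ becomes a sum over $q\in\{0,\dots,d-1\}$, over grids $\mathbb{X}_{\bl}$ with $|\bl|_1=M-q$, and over nodes $b\in\mathbb{X}_{\bl}$, of the quantity $(-1)^q\binom{d-1}{q}\,f(b)\prod_{j=1}^{d}\chi_{l_j,b_j}(a_j)$.

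The first step is a structural reduction. Such a term is nonzero only if $b_j=a_j$ for every $j$ with $l_j\ge k_j$ (a univariate cardinal function vanishes at any grid node other than its own centre); conversely $b_j=a_j$ forces $l_j\ge k_j$, since $b_j$ is a node of the level-$l_j$ grid. Thus for each contributing grid the ``unresolved'' set $T(\bl):=\{\,j:l_j<k_j\,\}$ is intrinsic, and $b$ must coincide with $a$ outside $T(\bl)$. Group the surviving pairs $(\bl,b)$ by their \emph{profile} $\bigl(T,\,(l_j)_{j\in T},\,(b_j)_{j\in T}\bigr)$: within a fixed profile the cardinal product equals $\prod_{j\in T}\chi_{l_j,b_j}(a_j)$, which does not depend on $q$, and the only freedom left in $\bl$ is the choice of the resolved components $(l_j)_{j\notin T}$ subject to $l_j\ge k_j$ and $\sum_{j\notin T}l_j=(M-q)-\sum_{j\in T}l_j$.

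It then remains to count, for each profile, the admissible resolved configurations and evaluate the alternating sum in $q$. The profile $T=\emptyset$ gives $b=a$, cardinal factor $1$, and multiplicity $\binom{M-q-|\bk|_1+d-1}{d-1}$ by Lemma~\ref{poly} (whose formula, using $|\bk|_1\le M$, is valid for all $q=0,\dots,d-1$); then $\sum_{q=0}^{d-1}(-1)^q\binom{d-1}{q}\binom{M-q-|\bk|_1+d-1}{d-1}=1$, by extracting the coefficient of $x^{\,M-|\bk|_1}$ in $(1-x)^{d-1}(1-x)^{-d}=(1-x)^{-1}$, so $f(a)$ enters with coefficient exactly $1$. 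For a profile with $T\neq\emptyset$, Lemma~\ref{poly} in dimension $d-|T|$ gives the multiplicity as $\binom{(M-q)-\sum_{j\in T}l_j-\sum_{j\notin T}k_j+d-|T|-1}{d-|T|-1}$; using $l_j\le k_j-1$ on $T$ together with $|\bk|_1\le M$ one checks that the top of this binomial coefficient stays non-negative throughout $q=0,\dots,d-1$, so the multiplicity is a single polynomial in $q$ of degree $d-|T|-1\le d-2$. Since $\sum_{q=0}^{d-1}(-1)^q\binom{d-1}{q}p(q)=0$ for every polynomial $p$ of degree $<d-1$, all profiles with $T\neq\emptyset$ contribute zero. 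Summing over profiles yields $\tilde{S}_{n,d}(a)=f(a)$, and since $a\in\tilde{\mathbb{X}}^{n,d}$ was arbitrary, SKI is interpolatory.

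I expect the main friction to be the bookkeeping in the case $T\neq\emptyset$: one must verify that the Lemma~\ref{poly} multiplicity is represented by one and the same polynomial in $q$ over the whole range $q=0,\dots,d-1$ — that no resolved configuration is spuriously gained or lost at the coarse end of the sum — which is precisely where the hypotheses $|\bk|_1\le M$ and $l_j<k_j$ on the unresolved directions are needed. A tidier alternative is to first telescope the combination formula into hierarchical form $\tilde{S}_{n,d}=\sum_{|\bi|_1\le M}\Delta_{i_1}\otimes\cdots\otimes\Delta_{i_d}$, where $\Delta_\ell:=P_\ell-P_{\ell-1}$ with $P_\ell$ the univariate level-$\ell$ interpolation operator (this rewriting again uses Lemma~\ref{poly} and the same binomial identity); then $\Delta_{i_1}\otimes\cdots\otimes\Delta_{i_d}f$ vanishes at $a$ unless $\bi\le\bk$, whence $\tilde{S}_{n,d}(a)=(P_{k_1}\otimes\cdots\otimes P_{k_d}f)(a)=S_{A_{\bk}}(a)=f(a)$.
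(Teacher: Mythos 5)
Your argument is correct and is essentially the paper's own proof: group the contributing grids by the set $T$ of directions left unresolved relative to the minimal level of $a$, note that the tensor-product cardinal value at $a$ is constant within each group, count the group sizes with Lemma~\ref{poly}, and annihilate the resulting polynomial in $q$ of degree $d-|T|-1<d-1$ with the alternating sum $\sum_{q}(-1)^q\tbinom{d-1}{q}$. Your explicit treatment of the profile $T=\emptyset$ --- verifying via $\sum_{q=0}^{d-1}(-1)^q\tbinom{d-1}{q}\tbinom{M-q-|\bk|_1+d-1}{d-1}=1$ that $f(a)$ enters with coefficient exactly one --- covers sparse-grid points whose minimal multi-index satisfies $|\bk|_1<n+d-1$, a case the paper's write-up (which places the evaluation point on a top-level grid with $|\bl|_1=n+d-1$) leaves implicit, so if anything your version is the more complete one.
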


\begin{proof}
We wish to compute the value of the sparse grid interpolant $\tilde{S}_{d,n}$ to the function $f$ at the point $\ba=(a_12^{-l_1},a_22^{-l_2},\cdots,a_d2^{-l_d})$, with $0 \le a_j \le 2^{l_j}$, $1 \le j \le d$, with at least one of the $a_j\in\mathbb{N}_0$ odd (this ensures that $\mathbb{X}_\bl$ is the grid with the smallest index in which this points appears). From hypothesis, the kernel is of tensor-product form, which implies the cardinal functions for interpolation are of the form
$$
\chi_{\bm,\bx} = \prod_{j=1}^d \chi_{m_j,x_j 2^{-m_{j}}},
$$
as observed in Section~\ref{tensor}.

In order to create a decomposition of the indices as in the example above, we need to introduce some notation. Let $d_1,d_2 \in \mathbb{N}$, with $d_1+d_2=d$. Let $\bn_1 \in \mathbb{N}^{d_1}$  with $\bn_1(j) \in \{1,2,\cdots,d\}$, $j=1,2,\cdots,d_1$, and $\bn_1(j) < \bn_1(j+1)$, $j=1,2,\cdots,d_1-1$. Similarly,  let $\bn_2 \in \mathbb{N}^{d_2}$  with $\bn_2(j) \in \{1,2,\cdots,d\}$, $j=1,2,\cdots,d_2$, and $\bn_2(j) <  \bn_2(j+1)$, $j=1,2,\cdots,d_2-1$.  Additionally, $\bn_1(j) \neq \bn_2(k)$ for any $j,k$. In other words, the components of $\bn_1$ and $\bn_2$ exhaust the set $\{1,2,\cdots,d\}$, and these numbers are ordered within the vectors $\bn_1$ and $\bn_2$.  We should note that once $\bn_1$ is specified, $\bn_2$ is uniquely determined and vice versa.

Let $\bm_i \in \mathbb{N}^{d_i}$, $i=1,2$. Then, let the multi-index $\bm = (\bn_1,\bm_1,\bn_2,\bm_2) \in \mathbb{N}^d$ have components $\bm(k) = \bm_i(j)$ if $\bn_i(j)=k$, $i=1,2$. So, for instance, if $\bn_1(3)=7$, then $\bm(7)=\bm_1(3)$, and if $\bn_2(4)=5$, then $\bm(5)=\bm_2(4)$. In this way, we break multi-indices into two pieces in a convenient fashion.

On the other hand, for $\bm \in \mathbb{N}^d$, let $\bm_{\bn_1} \in \mathbb{N}^{d_1}$ with $\bm_{\bn_1}(i)=\bm(\bn_1(i))$, $i=1,2,\cdots,d_1$ and $\bm_{\bn_2} \in \mathbb{N}^{d_2}$ with $\bm_{\bn_2}(i)=\bm(\bn_2(i))$, $i=1,2,\cdots,d_2$. Then, we have the identity
$$
\bm = (\bn_1,\bm_{\bn_1},\bn_2,\bm_{\bn_2}).
$$

Now, for each $d_1 = 1,2,\cdots,d-1$, and $\bn_1 \in \mathbb{N}^{d_1}$, let
$$
I(\bl,d_1,\bn_1)=\{ \bm_1 : \bm_1 < \l_{\bn_1} \}.
$$
For each $\bm_1 \in I(\bl,d_1,\bn_1)$, define
$$
J(\bl,d_1,\bn_1,\bm_1)=\{ (\bn_1,\bm_1,\bn_2,\bm_2) : \bm_2 \in \mathbb{N}^{d_2}, \; \bm_2 \ge \bl_{\bn_2}  \}.
$$
The sets $J(\bl,d_1,\bn_1,\bm_1)$ partition all multi-indices into sets with a fixed set of components of the multi-index less than those of $\bl$ and the remaining components greater than or equal to those of $\bl$. The only multi-index missing from this set is $\bl$ itself.

We compute the cardinality of the subsets of $J(\bl,d_1,\bn_1,\bm_1)$, given by
$$
J(\bl,d_1,\bn_1,\bm_1,q)=\{ \bm \in J(\bl,d_1,\bn_1,\bm_1) : |\bm|=n+d-1-q \}, 
$$
for $q=0,1,\cdots,d-1$, which, using Lemma~\ref{poly}, is given by
\begin{eqnarray}
\card J(\bl,d_1,\bn_1,\bm_1,q)  & = & {n+d+d_1-2-|\bl_{\bn_2}| -q \choose d_1-1}. \label{cardsub}
\end{eqnarray}

We now consider the contribution to the interpolant from a typical point on one of the grids indexed by elements of $J(\bl,d_1,\bn_1,\bm_1)$. 

Let $\bm=(\bn_1,\bm_1,\bn_2,\bm_2) \in J(\bl,d_1,\bn_1,\bm_1)$. Then, by definition, $\bl_{\bn_2} \ge \bm_2$. Let $\bx=(x_1 2^{-m_1},x_2 2^{-m_2},\cdots x_d 2^{-m_d})$, for some $0 \le x_j \le 2^{m_j}$, $i=1,2,\cdots,d$,  with cardinal function $\chi_{\bm,\bx}$. Then, 
$$
\chi_{\bm,\bx}(\ba) = \prod_{i=1}^d \chi_{m_i,x_i2^{-m_i}}(a_i 2^{-l_i}).
$$
Hence, $\chi_{\bm,\bx}(\ba) \neq 0$ only if $x_i 2^{-m_i} = a_i2^{-l_i}$, $i=\bn_2(j)$, $j=1,2,\cdots,d_2$. In this case,
$$
\chi_{\bm,\bx}(\ba) = \prod_{j=1}^{d_1} \chi_{{m_{\bn_1(j)}},x_{\bn_1(j)} 2^{-m_{\bn_1(j)}}}(a_{\bn_1(j)} 2^{-l_{\bn_1(j)}}),
$$
which is independent of $\bm_2$. 

So we have, for fixed $\bn_1$ and $\bm_1$, the same contribution at $\ba$ from any individual point which appears in a grid in $J(\bl,d_1,\bn_1,\bm_1)$. Thus, the contribution to from $\bx$ at point $\ba$ is
\begin{eqnarray*}
\lefteqn{\sum^{d-1}_{q=0}(-1)^{q} {d-1 \choose q} f(\bx) \sum_{\bm \in J(\bl,d_1,\bn_1,\bm_1,q)} \chi_{\bm,\bx}(\ba)} && \\
 & = & f(\bx)  \prod_{j=1}^{d_1} \chi_{{m_{\bn_1(j)}},x_{\bn_1(j)} 2^{-m_{\bn_1(j)}}}(a_{\bn_1(j)} 2^{-l_{\bn_1(j)}})  \\
 && \hspace{2cm} \times \sum^{d-1}_{q=0}(-1)^{q} {d-1 \choose q} \card J(\bl,d_1,\bn_1,\bm_1,q) \\
& = &  f(\bx)  \prod_{j=1}^{d_1} \chi_{{m_{\bn_1(j)}},x_{\bn_1(j)} 2^{-m_{\bn_1(j)}}}(a_{\bn_1(j)} 2^{-l_{\bn_1(j)}}) \\
&& \hspace{2cm} \times \sum^{d-1}_{q=0}(-1)^{q} {d-1 \choose q} {n+d+d_1-2-|\bl_{\bn_2}| -q \choose d_1-1} \\
& = & 0,
\end{eqnarray*}
since the binomial coefficient is a polynomial in $q$ of degree less than $d-1$, which is thus annihilated by the difference operator.

Thus, we see that all contributions from points in other grids other than $X_\bl$ are zero at points in $X_\bl$. Clearly, the only contribution at $\ba \in X_\bl$ from points in $X_\bl$ will be from $\ba$ itself. $\Box$
\end{proof}

\section{Numerical Integration} \label{integ}

Equipped with an interpolation method, we can now also discuss integration over the unit cube. A quadrature rule will approximate the integral
$$
\int_{[0,1]^d} f(\bx) d\bx \approx \sum_{\bz \in Z} w_\bz f(\bz),
$$
where $Z \subset [0,1]^d$ is a finite set of points, and $\{ w_\bz : \bz \in Z \}$ are the {\sl weights}. 
If we have a cardinal basis for $Z$, $\{\chi_\bz : \bz \in Z \}$, then we approximate
$$
f = \sum_{\bz \in Z} f(\bz) \chi_\bz,
$$
giving rise to the quadrature formula
$$
\int_{[0,1]^d} f(\bx) d\bx \approx \sum_{\bz \in Z} f(\bz) \int_{[0,1]^d} \chi_\bz(\bx) d\bx.
$$
Thus, the weights are given by
$$
\int_{[0,1]^d} \chi_\bz(\bx) d\bx.
$$

In the case when the cardinal functions are tensor products, viz., $\chi_\bz=\chi^1_{z_1} \times \cdots \times \chi^d_{z_d}$ (as is the case of Gaussians, as observed in Section~\ref{tensor}), we can straightforwardly compute the weights as follows:
$$
\int_{[0,1]^d} \chi_\bz(\bx) d\bx = \prod_{j=1}^d \int_0^1 \chi^j_{z_j}(x_j) dx_i.
$$

As also mentioned in Section~\ref{tensor}, we can compute the cardinal functions offline to arbitrary precision for, e.g., up to 129 equally spaced points, thereby allowing for fast and highly accurate computation of the quadrature weights. Indeed, the integral of a univariate cardinal function is of the form
\[
 I = \int_{0}^{1}\chi(x)dx  
   = \int_{0}^{1}\sum_{i=1}^N c_{i} \exp(-r^2(x-y_i)^2)dx,
\]
for some $r, y_i,c_i \in \mathbb{R}$, $i=1,\cdots,N$, which, upon the change of variables $ u=r(x-y_i) $, gives
\[
 I   = {1 \over r} \sum_{i=1}^N c_{i} \int_{-ry_i}^{1-ry_i} \exp(-u^2)du 
  =  {\pi \over 2r} \sum_{i=1}^N c_{i} (\erf (r(1-y_i)-\erf(-ry_i)),
 \]
with the error function $\erf$ defined by
\begin{equation*}
\erf(x)=\frac{2}{\sqrt{\pi}}\int_{0}^{x}e^{-t^{2}}dt.
\end{equation*}
The error function can be computed to arbitrary precision.

%As we have shown here, we can compute the weights in a multivariate sparse grid integration rule to machine precision. In the example below we see the rule applied in 5 space dimensions.

\subsection{Examples}

In this section we give a number of examples. We would like to develop a set of benchmark examples so we invite other researchers to contact us with their results for these cases, and perhaps forward us their own examples. We do two tensor product examples, respectively in 5 and 10 dimensions, and then two non-tensor product examples, the first being smooth, the second with derivative singularities on the boundary. The motivation for the second choice is approximation of the pay-off function in option pricing problems.

\begin{enumerate}
\item Consider the function
\begin{equation}
f(\bx)=\prod_{i=1}^5 4x_i(1-x_i).
\end{equation}
The corresponding exact integral of this function on the domain $[0,1]^5$ with $16$ digits accuracy is $(\frac{2}{3})^5=0.131687242798354$. In Table~\ref{5dex} we give the MLSKI quadrature computation of the same integral, recording the number of nodes use, and the absolute and relative errors.

\begin{table}[!htb] 
\begin{center}
\addtolength{\tabcolsep}{-1pt}
\begin{tabular}{||c|c|c|c|c||}
  \hline
nodes & Absolute error &  Relative error \\
\hline \hline
  % after \\: \hline or \cline{col1-col2} \cline{col3-col4} ...
  243          & 3.0091e-2 & 2.2850e-1 \\
  1053        & 5.1232e-3 & 3.8904e-2  \\
  3753      & 1.3013e-3 & 9.8818e-3  \\
  12033      & 1.4927e-4 & 1.1335e-3  \\
  36033     & 3.6134e-5 & 2.7439e-4  \\
  102785     & 3.4530e-6 & 2.6222e-5  \\
  282525    & 8.1811e-7 & 6.2125e-6  \\
  754845   & 6.9041e-8 & 5.2428e-7  \\[0.5ex]
 \hline
\end{tabular}
\end{center}
\caption{MLSKI quadrature for $f$.}
\label{5dex}
\end{table}
\item The following example is in 10 dimensions:
\begin{equation}
g(\bx)=\prod_{i=1}^{10}e^{(-x_i(1-x_i))}
\end{equation}
The corresponding exact integral of this function on the domain $[0,1]^{10}$ with $16$ digits accuracy is $0.194279067580947$. Table~\ref{10dex} shows how the error reduces with respect to the increase of degrees of freedom. 
\begin{table}[!htb]
\begin{center}
\addtolength{\tabcolsep}{-1pt}
\begin{tabular}{||c|c|c|c|c||}
  \hline
nodes  & Absolute-error &  Relative-error \\
\hline \hline
  % after \\: \hline or \cline{col1-col2} \cline{col3-col4} ...
 
  59049    & 1.5068e-1 & 7.7556e-1  \\
  452709    & 5.8153e-3& 2.9933e-2 \\
  2421009   &3.5882e-3 & 1.8469e-2  \\
 10819089  & 4.9348e-4 &  2.5400e-3  \\[0.5ex]
 \hline
\end{tabular}
\end{center}
\caption{MLSKI quadrature for $g$.} 
\label{10dex}
\end{table}
\item Let us now consider the non-tensor product Franke $4$D function:
\begin{equation}
\begin{aligned}
   u_{F4D}(\bx)  = &\; \frac{3}{4}e^{(-(9x_{1}-2)^{2}-(9x_{2}-2)^{2} -(9x_{3}-2)^{2})/4 -(9x_{4}-2)^{2})/8}\\
     &  +\frac{3}{4}e^{(-(9x_{1}+1)^{2})/49-((9x_{2}+1)^{2})/10-((9x_{3}+1)^{2})/29-((9x_{4}+1)^{2})/39}\\
      & +\frac{1}{2}e^{(-(9x_{1}-7)^{2})/4-(9x_{2}-3)^{2}-((9x_{3}-5)^{2})/2-((9x_{4}-5)^{2})/4}\\
       & -\frac{1}{5}e^{(-(9x_{1}-4)^{2})/4-(9x_{2}-7)^{2}-((9x_{3}-5)^{2})-((9x_{4}-5)^{2})}.
\end{aligned} 
 \end{equation}
The integral of this function on the domain $[0,1]^4$ with $16$ digits accuracy is $0.037221856819405$.  
The errors in integration are shown in Table~\ref{4dex}.
\begin{table}[!htb]
\begin{center}
\addtolength{\tabcolsep}{-1pt}
\begin{tabular}{||c|c|c|c|c||}
  \hline
nodes  & Absolute-error &  Relative-error \\
\hline \hline
  % after \\: \hline or \cline{col1-col2} \cline{col3-col4} ...
  81          & 1.6398e-2 & 4.4055e-1\\
  297        & 1.2736e-2& 3.4216e-1 \\
  945       & 7.9106e-3 & 2.1253e-1 \\
  2769      & 5.4904e-3 & 1.4751e-1\\
  7681      & 5.5825e-4 & 1.4998e-2  \\
  20481     & 1.3012e-4 & 3.4959e-3  \\
  52993    & 1.6245e-5 & 4.3643e-4  \\
  133889   & 1.2027e-7 & 3.2312e-6  \\
  331777  & 2.2934e-8 &  6.1615e-7  \\[0.5ex]
 \hline
\end{tabular}
\end{center}
\caption{MLSKI quadrature for $U_{F4D}$.}
\label{4dex}
\end{table}
\item The final example is a $5$ dimensional non-tensor product function with derivative discontinuities in different directions on the boundary. In order to apply MLSKI to derivative pricing problems we would need to be able to approximate such functions as
\begin{equation}
F_{{\rm payoff}}(\bx)=\sum_{i=1}^5 \max  (x_i-\frac{1}{2},0).
\end{equation}
The exact integral of this function on the domain $[0,1]^5$  is $\frac{5}{8}$. In Table~\ref{payoffex} we see how the error behaves with regard to the degrees of freedom. 
\begin{table}[!htb]
\begin{center}
\addtolength{\tabcolsep}{-1pt}
\begin{tabular}{||c|c|c|c|c||}
  \hline
nodes & Absolute error &  Relative error \\
\hline \hline
  % after \\: \hline or \cline{col1-col2} \cline{col3-col4} ...
  243          & 1.5129e-1 & 2.4206e-1 \\
  1053        & 5.4282e-3 & 8.6851e-3  \\
  3753      & 2.9705e-3  & 4.7529e-3\\
  12033      & 1.0128e-3 & 1.6206e-3  \\
  36033     & 3.2119e-4 & 5.1390e-4 \\
  102785     & 9.0693e-5 & 1.4511e-4  \\
  282525    & 2.2032e-5& 3.5251e-5  \\
  754845   & 5.7779e-6& 9.2447e-6 \\[0.5ex]
 \hline
\end{tabular}
\end{center}
\caption{MLSKI quadrature for $F_{{\rm payoff}}$.} 
\label{payoffex}
\end{table}
\end{enumerate}

\bibliographystyle{siam}
\bibliography{Thesisbibdatanew}

\end{document}